\date{}
\title{
F. John's stability conditions vs. A. Carasso's SECB constraint for backward
parabolic problems\footnote{To appear in \underline{Inverse Problems}}
}
\author{
Jinwoo Lee\thanks{Department of Mathematics
        Kwangwoon University,
        Seoul 139-701
        Korea.
({\tt jinwoolee@kw.ac.kr}).
} \and 
Dongwoo Sheen\thanks{Department of Mathematics and Interdisciplinary Program
  in Computational Science \& Technology,
Seoul National University, Seoul 151-747, Korea.
({\tt  sheen@snu.ac.kr}). 
}}
\renewcommand{\tilde}{\widetilde}
\newcommand{\rb}[1]{\raisebox{1.5ex}[0pt]{#1}}
\newcommand{\be}{\begin{eqnarray}}
\newcommand{\ee}{\end{eqnarray}}
\newcommand{\bal}{\begin{aligned}}
\newcommand{\eal}{\end{aligned}}
\newcommand{\bes}{\begin{eqnarray*}}
\newcommand{\ees}{\end{eqnarray*}}
\newcommand{\bs}{\begin{subeqnarray}}
\newcommand{\es}{\end{subeqnarray}}
\newcommand{\bss}{\begin{subeqnarray*}}
\newcommand{\ess}{\end{subeqnarray*}}
\newtheorem{theorem}{Theorem}[section]
\newtheorem{lemma}[theorem]{Lemma}
\newtheorem{proposition}[theorem]{Proposition}
\newtheorem{remark}[theorem]{Remark}
\newtheorem{corollary}[theorem]{Corollary}
\newtheorem{definition}[theorem]{Definition}
\newcommand{\defref}[1]{Definition~\ref{#1}}
\newcommand{\thmref}[1]{Theorem~\ref{#1}}
\newcommand{\secref}[1]{Section~\ref{#1}}
\newcommand{\lemref}[1]{Lemma~\ref{#1}}
\newcommand{\propref}[1]{Proposition~\ref{#1}}
\newcommand{\rmkref}[1]{Remark~\ref{#1}}
\newcommand{\tabref}[1]{Table~\ref{#1}}
\numberwithin{equation}{section}
\def\for{\quad{for}\ }
\def\with{\quad{with}\ }
\def\p{\partial}
\def\la{\lambda}
\def\Om{\Omega}
\def\O{\Omega}
\def\de{\delta}
\def\Ga{\Gamma}
\def\G{\Gamma}
\def\Re{\operatorname{Re}}
\def\and{\quad\text{and}\quad}
\def\i{\text{\rm i}}
\begin{document}
\maketitle
\begin{abstract}
In order to solve backward parabolic problems F. John [{\it
  Comm. Pure. Appl. Math.} (1960)]
introduced the two constraints
``$\|u(T)\|\le M$'' and  $\|u(0) - g \| \le \delta$ where $u(t)$
satisfies the backward heat equation for $t\in(0,T)$ with the initial data $u(0).$

The {\it slow-evolution-from-the-continuation-boundary} (SECB) constraint
has been introduced by A. Carasso in [{\it SIAM J. Numer. Anal.} (1994)] 
to attain continuous dependence on data for 
backward parabolic problems even at the continuation boundary $t=T$. 
The additional ``SECB constraint'' guarantees a significant improvement in
stability up to $t=T.$ 
In this paper we prove that the same type of stability can be obtained by using only
two constraints among the three. More precisely, we show that 
the a priori boundedness condition $\|u(T)\|\le M$ is redundant.
This implies that the Carasso's SECB condition can be used to replace the
a priori boundedness condition of F. John with an improved stability estimate.
Also a new class of regularized solutions is introduced for backward parabolic
problems with an SECB constraint. The new regularized solutions are optimally stable
and we also provide a constructive scheme to compute.
Finally numerical examples are provided.
\end{abstract}

\noindent{\bf Keywords:} slow evolution constraint (SECB),
backward parabolic problem, ill-posed problem, 
Laplace transform

\noindent{\bf AMS Subject Classes:} 47A52, 44A10, 65M30

\section{Introduction}
One of the most classical inverse and ill-posed problems \cite{hadamard, john60, payne}
is to find the past heat distribution $u(\cdot, t)$ for $0\le s < T$
based on the temperature distribution $u(\cdot, T)$ known at the current time $T$,
which is formulated as to find $u$ such that
\bes
\frac{\p u}{\p s} - \Delta u &=& 0  \quad\text{on }\O\times (0,T); \\
u(x,s)&=&0\quad\forall (x,s)\in \p\O\times (0,T); \quad u(x,T)=g(x) \quad\forall x\in
\O.
\ees
The change of variables $T-s = t$ yields to the problem
\begin{subeqnarray}\label{eq:problem0}
\frac{\p u}{\p t} + \Delta u &=& 0\quad\text{on } \O\times (0,T); \slabel{eq:problem0a} \\
u(x,t)&=&0\quad\forall (x,t)\in \p\O\times (0,T); \quad u(x,0)=g(x) \quad\forall x\in
\O.\slabel{eq:problem0b}
\end{subeqnarray}
Let $-A$ be a second-order linear uniformly elliptic partial 
differential operator 
on a domain $\Om$ with the {\it homogeneous} Dirichlet boundary condition on
the boundary $\p\O$, which is
a self-adjoint operator in $L^2(\O)$, such that $-A$ has the eigenvalues
$0<\lambda_1\le \lambda_2\le \cdots <\infty$ with corresponding orthonormal
eigenfunctions
$\phi_n$'s.
Problem \eqref{eq:problem0} generalizes to
\begin{subeqnarray}\label{eq:problem}
\frac{\p u}{\p t} + Au &=& 0  \quad\text{on }\O\times (0,T),\\
 u(x,0)&=&g(x) \quad\forall x\in \O.
\end{subeqnarray}
The problem \eqref{eq:problem} is ill-posed in the sense that 
the solution does not depend continuously on the data $g$ \cite{hadamard, john60, payne}.
To stabilize it, F. John \cite{john60} introduced a fundamental concept to
prescribe a bound on the solution at $t=T$ with relaxation of the initial data $g.$
More precisely, given positive constants $M$ and $\de$, consider the class of
solutions $u_j$'s which satisfy 
\begin{subeqnarray}\label{john-stab-sol}
\frac{\p u_j}{\p t} + A u_j &=& 0 \quad\text{on }\O\times (0,T),\slabel{eq:backward} \\
\|u_j(0) - g \| \le \de&\quad& \mbox{and } \qquad
\|u_j(T)\| \le M. \slabel{condition1}
\end{subeqnarray}
Then $u_j$'s satisfy the following H\"older-type stability
\cite{agmon,payne}: for any two solutions $u_j, j =1, 2,$ of
\eqref{john-stab-sol},
\be
\|u_1(t)-u_2(t)\|\le 2 M^{t/T} \de^{1-t/T}
\for t\in [0,T].
\label{eq:holder}
\ee
where $\|\cdot\|$ denotes the $L^2(\O)$-norm.
The $\de$ on the right hand side of \eqref{eq:holder} guarantees
continuous dependence on data for $t\in (0,T)$.

However, one loses the continuous dependence on data property 
at the continuation boundary ($t=T$) no matter how small a $\delta$ is chosen,
which has been annoying mathematicians and scientists for about three decades since
F. John's work \cite{john60}.
To overcome it, Carasso in his seminal works \cite{carasso, car_nonsmooth} 
introduced an additional constraint, called a
{\it slow evolution from the continuation boundary} (SECB) constraint, which
is an a priori statement about the rate of change of the solution near the continuation boundary.
The definition of SECB constraint is discussed in \eqref{eq:s_star} and
\defref{def:secb} in \S 2. With an extra SECB constraint, $u_j$'s fulfill the following
improved stability over \eqref{eq:holder}:
\be\label{eq:secb1}
\|u_1(t)-u_2(t)\|\le 2\Lambda^{t/T}\de, \quad  t\in [0,T],
\ee
where $\Lambda<\frac{M}{\de}$ is a positive constant.
The three constraints, namely \eqref{condition1} and an SECB constraint,
have been used extensively and have proved usefulness for stabilizing ill-posed
problems \cite{car_lin_nonlin,car_logarithm, car_blind,car_preceed}.
In \cite{carasso} Carasso also provides a constructive scheme to find regularized solutions
which can be implemented when $A$ in \eqref{eq:problem} has constant
coefficients.


In this paper optimal stability \eqref{eq:secb1} is proved by using the two
conditions $\|u_j(0)-g\|\le \de$ and an SECB constraint only. In other words,
{\it we show that an a priori bound $\|u_{j}(T)\|\le M$ in \eqref{condition1} is redundant.}
Also a class of new regularized solutions is introduced, which gives an 
optimal stability of the form \eqref{eq:secb1},
and can be obtained numerically even when $A$ has variable
coefficients,  
which will make the SECB constraints more useful and practical
in many application areas.
Applications of the above idea to image deblurring will be available in a
forthcoming paper \cite{jleesheen-denoising}.

The rest of the paper is organized as follows. \secref{sec2}
reviews the concept of SECB and its properties. The new proof of the stability
is then given in \secref{sec3}. In \secref{sec4} a new class of regularized 
solutions is defined and its optimal stability is proved.
In \secref{sec5} numerical results are reported with the proposed
constructive algorithm.

\section{Slow evolution from the continuation boundary (SECB)}\label{sec2}
In this section the notion of SECB and its properties are reviewed in brief.
More detailed explanations and extensive applications of SECB can be found in
\cite{carasso, car_nonsmooth,car_lin_nonlin,car_logarithm, car_blind}.

Let us begin with the following simple observation.
For given positive $K,M,$ and $\de>0$ with $0<K\ll \frac{M}{\de},$ 
set $s^*=s^*(\de,M,K)$ to be 
\be\label{eq:s_star}
s^*(\de,M,K)=T\frac{\log(\frac{M}{\de}-K)}{\log(\frac{M}{\de})}.
\ee
so that
$\frac{M}{\de} = K+ \left(\frac{M}{\de}\right)^{s^*/T}$.
Set $g(x,y)\equiv 0.$
Then $u_1(x,y,t)\equiv 0$ is a (trivial) solution to \eqref{eq:problem},
and to \eqref{john-stab-sol}.
Let $u_2(x,y,t)$ be another solution to \eqref{john-stab-sol}
with an additional constraint
$\|u_1(t)-u_2(t)\| = \|u_2(t)\|= M^{t/T}\de^{1-t/T}$; for example, 
$u_2(x,y,t)= \de e^{\la t}\phi(x,y)$ satisfies such an additional condition
provided 
$\la=\frac1{T}\log(\frac{M}{\de})$, where $\phi(x,y)$ 
is an orthonormal eigenfunction of the spatial operator $-A$ in 
\eqref{john-stab-sol} with eigenvalue $\la$, 
which mimics one of the worst case solutions to \eqref{eq:backward}
with the constraints \eqref{condition1}.
Then $\|u_2(T)-u_2(s^*)\|=\|M\phi(x,y) - (M-\de K)\phi(x,y)\|= K\de$;
moreover, $\|u_2(T)-u_2(t)\|\le K \de$ if and only if $t\ge s^*$.
Thus if an extra constraint on $u_j(x,y,t)$ is imposed such that
$\|u_j(T)-u_j(s)\|\le K\de$ for some $s$ which is {\it less than} $s^*$, 
a better estimation than that given in \eqref{eq:holder} is expected. 
This observation leads to the following definition, firstly appeared in \cite{carasso},

\begin{definition}\label{def:secb}{\rm[Carasso (1994)]}
For given $K>0$, let $s^*$ be defined by \eqref{eq:s_star}. If 
there exists a known fixed $s>0$ with $s<s^*$ such that
\be\label{eq:secb}
\|u_j(T)-u_j(s)\| \le K\de,
\ee
$u_j$ is said to satisfy ``slow evolution from the continuation 
boundary(SECB)'' constraint.
\end{definition}
\begin{remark}
Condition \eqref{eq:secb} implies that the class of solutions is restricted to 
satisfy the {\bf slow evolution} condition near the continuation boundary $t=T.$
\end{remark}

Carasso then proves that any two solutions to \eqref{eq:backward} with constraints 
\eqref{condition1}
and \eqref{eq:secb} have the following improved stability:
\begin{theorem}{\rm[Carasso (1994)]}\label{thm:stabil2}
Let $u_j(t)$, $j=1,2$, be two solutions to \eqref{eq:backward} with constraints
\eqref{condition1} and \eqref{eq:secb}. Then
\be\label{eq:stabil2}
\|u_1(t)-u_2(t)\|\le 2\Lambda^{t/T}\de, \quad t\in [0,T],
\ee
where $\Lambda=\Lambda(K,s)$ is the unique root of the equation 
\be\label{eq:alge}
x=K+x^{s/T}.
\ee
\end{theorem}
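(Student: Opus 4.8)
The plan is to run the classical logarithmic--convexity argument for backward parabolic equations and then feed the SECB constraint into it to sharpen the endpoint bound at $t=T$. Set $w=u_1-u_2$; by linearity $w$ solves \eqref{eq:backward}, and expanding in the orthonormal eigenbasis $\{\phi_n\}$ of $-A$ we have $w(t)=\sum_n c_n e^{\la_n t}\phi_n$ with $c_n=(w(0),\phi_n)$, so that $\|w(t)\|^2=\sum_n c_n^2 e^{2\la_n t}$. The first step is to show that $t\mapsto\|w(t)\|$ is log-convex on $[0,T]$: applying H\"older's inequality to the Dirichlet-type series $\sum_n c_n^2 e^{2\la_n t}$ (equivalently, the Cauchy--Schwarz inequality $(F')^2\le FF''$ for $F(t)=\|w(t)\|^2$, which sidesteps differentiation issues) yields $\|w(\th t_1+(1-\th)t_2)\|\le\|w(t_1)\|^{\th}\|w(t_2)\|^{1-\th}$ for $\th\in[0,1]$. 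In particular, with $t_1=0$ and $t_2=T$,
\[
\|w(t)\|\le\|w(0)\|^{1-t/T}\,\|w(T)\|^{t/T},\qquad t\in[0,T].
\]

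The second step brings in the two inexpensive ingredients. From $\|u_j(0)-g\|\le\de$ and the triangle inequality, $\|w(0)\|\le 2\de$; from the two SECB constraints \eqref{eq:secb} (with common node $s$), $\|w(T)-w(s)\|\le 2K\de$. Evaluating the log-convexity estimate at the interior node $t=s$ and using $\|w(0)\|\le 2\de$ gives $\|w(s)\|\le(2\de)^{1-s/T}\|w(T)\|^{s/T}$, hence
\[
\|w(T)\|\le\|w(s)\|+\|w(T)-w(s)\|\le(2\de)^{1-s/T}\|w(T)\|^{s/T}+2K\de .
\]
Dividing by $2\de$ and writing $y=\|w(T)\|/(2\de)\ge 0$, this collapses to the scalar inequality $y\le K+y^{s/T}$.

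The third step is the elementary analysis of \eqref{eq:alge}. Consider $\psi(x)=x-x^{s/T}$ on $[0,\infty)$; since $0<s/T<1$, $\psi$ is strictly convex, $\psi(0)=\psi(1)=0$, $\psi'(1)=1-s/T>0$, and $\psi(x)\to+\infty$. Hence $\psi$ is strictly increasing on $[1,\infty)$, the equation $\psi(x)=K$ has a unique positive root $\Lambda=\Lambda(K,s)>1$ (the root appearing in \eqref{eq:alge}), and $\psi(x)\le K$ holds precisely for $0\le x\le\Lambda$. Therefore $y\le\Lambda$, i.e.\ $\|w(T)\|\le 2\Lambda\de$. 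Substituting this improved endpoint bound back into the log-convexity estimate finishes the proof: $\|w(t)\|\le(2\de)^{1-t/T}(2\Lambda\de)^{t/T}=2\Lambda^{t/T}\de$, which is \eqref{eq:stabil2}. (The hypothesis $s<s^*$ of \defref{def:secb} is used only to guarantee $\Lambda<M/\de$ — via $\Lambda(K,s^*)=M/\de$ and monotonicity of $s\mapsto\Lambda(K,s)$ — so that \eqref{eq:stabil2} genuinely improves \eqref{eq:holder}; the a priori bound $\|u_j(T)\|\le M$ itself plays no quantitative role in the derivation, a fact the paper later exploits.)

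The step I expect to be most delicate is not any single inequality but keeping the log-convexity argument honest at the operator level — convergence of the eigenexpansion for a merely $L^2$ terminal datum and justification of the term-by-term manipulation — together with the sign bookkeeping in the third step: one must verify that $\Lambda$ lies on the increasing branch of $\psi$, so that $y\le K+y^{s/T}$ really does force $y\le\Lambda$ rather than only a one-sided bound away from a larger value.
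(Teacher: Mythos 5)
Your proof is correct, and it shares the paper's core ingredients: the logarithmic-convexity bound $\|w(t)\|\le\|w(0)\|^{1-t/T}\|w(T)\|^{t/T}$, the triangle inequality routed through $w(s)$, and the identification of the resulting threshold with the root $\Lambda$ of $x=K+x^{s/T}$. Where you genuinely diverge is in the final extraction step. The paper (in its proof of the analogous Theorem~\ref{thm:secb}) evaluates the convexity bound at $t=s$, obtains a self-improving estimate for $K\tilde\de+\|z(s)\|$, and runs a fixed-point iteration $\zeta_{n}/\tilde\de\le K+(\zeta_{n-1}/\tilde\de)^{s/T}$ whose limit is $\Lambda$; this requires a finite starting bound $\zeta_1$, supplied either by the a priori constraint $\|u_j(T)\|\le M$ (as in Carasso's original argument) or by the paper's Lemma~\ref{lem:bound}. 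You instead evaluate at $t=T$, arrive directly at the scalar inequality $y\le K+y^{s/T}$ for $y=\|w(T)\|/(2\de)$, and resolve it in one step from the convexity and monotonicity of $\psi(x)=x-x^{s/T}$ on $[1,\infty)$, which correctly gives $\{x\ge0:\psi(x)\le K\}=[0,\Lambda]$ and hence $y\le\Lambda$. Your route buys two things: it replaces the iteration and its convergence discussion with an elementary one-shot argument, and it makes transparent that the only role of $M$ is to certify $\|w(T)\|<\infty$ (which already follows from the solutions existing up to $t=T$) --- precisely the redundancy the paper establishes separately in Section~\ref{sec3}. Your closing parenthetical, that $s<s^*$ serves only to guarantee $\Lambda<M/\de$, matches Lemma~\ref{lemma:gamma_ineq} and Remark~\ref{rem:7} of the paper.
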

As for $\Lambda$, we have the following estimation.
\begin{lemma}\label{lemma:gamma_ineq}
Given $\de$, $M$, and $K$ satisfying $0<\de\ll M$,
$0<K\ll \frac{M}{\de}$, and $K+1<\frac{M}{\delta}$, let $s^*$ be defined by \eqref{eq:s_star}. 
For $0<s<s^*$, let $\Lambda=\Lambda(K,s)$ be the unique root
of \eqref{eq:alge}. Then
\be\label{eq:gamma_ineq}
K+1<\Lambda< \frac{M}{\de}.
\ee
Moreover, with any $z_1>0$, the iterates
$z_{n+1}=K+z_n^{s/T}, n=1,2\cdots,$ converge to $\Lambda$.
\end{lemma}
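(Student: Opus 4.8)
The plan is to analyze the scalar function $f(x) = K + x^{s/T} - x$ on $(0,\infty)$ and locate its root, then separately establish convergence of the fixed-point iteration. Since $0 < s < s^*$, we have $0 < s/T < 1$, so $f$ is strictly concave: $f''(x) = \frac{s}{T}\bigl(\frac{s}{T}-1\bigr) x^{s/T-2} < 0$. Also $f(0^+) = K > 0$ and $f(x) \to -\infty$ as $x \to \infty$ (the linear term dominates $x^{s/T}$ since $s/T < 1$), so $f$ has exactly one positive root $\Lambda$; this is the uniqueness asserted in \thmref{thm:stabil2}, which I would simply invoke. Because $f$ is positive on $(0,\Lambda)$ and negative on $(\Lambda,\infty)$, to prove $K+1 < \Lambda < \frac{M}{\de}$ it suffices to check $f(K+1) > 0$ and $f(\frac{M}{\de}) < 0$.

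For the lower bound, $f(K+1) = K + (K+1)^{s/T} - (K+1) = (K+1)^{s/T} - 1 > 0$ since $K+1 > 1$ and the exponent $s/T$ is positive; so $\Lambda > K+1$. For the upper bound, recall from \eqref{eq:s_star} that $\frac{M}{\de} = K + \bigl(\frac{M}{\de}\bigr)^{s^*/T}$, i.e. $f_{s^*}\bigl(\frac{M}{\de}\bigr) = 0$ where $f_{s^*}$ denotes the analogue of $f$ with $s$ replaced by $s^*$. Then
\be
f\!\left(\tfrac{M}{\de}\right) = K + \left(\tfrac{M}{\de}\right)^{s/T} - \tfrac{M}{\de} = \left(\tfrac{M}{\de}\right)^{s/T} - \left(\tfrac{M}{\de}\right)^{s^*/T} < 0,
\ee
because $\frac{M}{\de} > 1$ (a consequence of $\de \ll M$, indeed of $K+1 < \frac{M}{\de}$) and $s < s^*$ forces $s/T < s^*/T$, so the map $r \mapsto \bigl(\frac{M}{\de}\bigr)^r$ is strictly increasing. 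This gives $\Lambda < \frac{M}{\de}$ and completes \eqref{eq:gamma_ineq}.

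For the convergence of the iteration $z_{n+1} = \Phi(z_n)$ with $\Phi(x) = K + x^{s/T}$ from an arbitrary start $z_1 > 0$, the key facts are that $\Phi$ is increasing and concave with the unique fixed point $\Lambda$, that $\Phi(x) > x$ for $x < \Lambda$ and $\Phi(x) < x$ for $x > \Lambda$, and that $\Phi(x) \geq K > 0$ for all $x > 0$. I would argue: after the first step $z_2 = \Phi(z_1) \geq K$, so we may assume $z_1 \geq K$; then monotonicity of $\Phi$ gives that $(z_n)$ is eventually monotone — increasing and bounded above by $\Lambda$ if $z_1 < \Lambda$, decreasing and bounded below by $\Lambda$ if $z_1 > \Lambda$ (one checks the relevant interval is forward-invariant using $\Phi$ increasing and $\Phi(\Lambda)=\Lambda$) — hence convergent, and the limit must be a fixed point of the continuous map $\Phi$, i.e. $\Lambda$.

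The main obstacle I anticipate is not any single estimate but the bookkeeping in the convergence argument: one must handle an arbitrary positive start $z_1$ (possibly very small, below $K$, or very large), verify that the iterate lands in a forward-invariant interval bounded by $\Lambda$, and rule out the degenerate behavior — which is clean here precisely because $s/T \in (0,1)$ makes $\Phi$ a concave, increasing contraction-like map with a single attracting fixed point. Making the "eventually monotone and bounded" step rigorous (rather than just plausible from a cobweb picture) is where the care is needed; everything else reduces to evaluating $f$ at two points.
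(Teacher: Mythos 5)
Your proof is correct, and it establishes the same two bounds by a more self-contained route than the paper does. The paper's own proof is three sentences: it obtains $\Lambda<\frac{M}{\de}$ from the (unproved) assertion that the root of \eqref{eq:alge} decreases monotonically as $s$ decreases, combined with the fact that $\frac{M}{\de}$ is by construction the root at $s=s^*$; it obtains $K+1<\Lambda$ from the remark that $1$ is the root when $K=0$ and $K>0$; and it dismisses the iteration as ``a standard result of the fixed point iteration.'' You instead evaluate $f(x)=K+x^{s/T}-x$ at the two candidate bounds and use strict concavity together with $f(0^+)=K>0$ and $f(x)\to-\infty$ to fix the sign structure of $f$ around its unique root. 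Your identity $f\bigl(\frac{M}{\de}\bigr)=\bigl(\frac{M}{\de}\bigr)^{s/T}-\bigl(\frac{M}{\de}\bigr)^{s^*/T}<0$ is in effect a proof of the paper's monotonicity-in-$s$ claim at the one point where it is needed, and $f(K+1)=(K+1)^{s/T}-1>0$ replaces the paper's somewhat cryptic comparison with the $K=0$ case by a one-line computation. Your spelled-out monotone-iteration argument is also sound: $(0,\Lambda]$ and $[\Lambda,\infty)$ are forward invariant under the increasing map $\Phi(x)=K+x^{s/T}$, the iterates are monotone on each, and the limit must be the unique fixed point (the preliminary reduction to $z_1\ge K$ is harmless but unnecessary). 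What your version buys is that every assertion is actually verified; what the paper's buys is brevity and the more global picture of how $\Lambda$ varies with $s$ and $K$, which is the fact reused qualitatively elsewhere in the discussion of the choice $s<s^*$.
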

\begin{proof}
Since $\frac{M}{\de}$ is the root of \eqref{eq:alge} with $s=s^*$ 
by the definition of $s^*$, 
and the root of \eqref{eq:alge}
decreases monotonically with decreasing $s$, we have $\Lambda<\frac{M}{\de}$.
We also have $K+1<\Lambda$ since $1$ is the root of \eqref{eq:alge}
with $K=0$, and $K>0$. The last statement is a standard result
of the fixed point iteration.
\end{proof}
\begin{remark}
Notice that the second statement of \lemref{lemma:gamma_ineq} 
is slightly different
from that given in \cite{carasso} which states that $K+1\le z_1\le \frac{M}{\de}$.
This will play an important role (see \rmkref{bcd}) in our analysis.
\end{remark}
Since $\Lambda$ is less than $\frac{M}{\de}$ by \lemref{lemma:gamma_ineq},  
\thmref{thm:stabil2} shows an improved stability estimate compared to
\eqref{eq:holder}. Moreover, this retains its continuous dependence on data
even at the continuation boundary $t=T$. 

\section{$\|u_j(T)\|\le M$ is redundant}\label{sec3}
In this section we prove \eqref{eq:stabil2} by using 
one condition $\|u_{j}(0)-g\|\le \de, j=1,2,$ in \eqref{condition1}
and the SECB condition \eqref{eq:secb} only. 
\begin{theorem}\label{thm:secb}
For given data $g\in L^2(\O)$, let $u_j(t)$, $j=1,2$, 
be two solutions to 
\[
\frac{\p u_j}{\p t} + Au_j = 0 \quad\text{on }\O\times (0,T)
\]
with constraints
\be\label{cond:secb}
\|u_j(0)-g\|\le \de  \quad\mbox{and}\quad
\|u_j(T)-u_j(s)\| \le K\de
\ee
for known positive parameters $\de>0$, $K>0$, and $s\in(0,T)$. 
Then
\be\label{eq:thm_diff}
\|u_1(t)-u_2(t)\|\le 2\Lambda^{t/T}\de, \quad t\in [0,T],
\ee
where $\Lambda=\Lambda(K,s)$ is the unique root of the equation 
\eqref{eq:alge}.
\end{theorem}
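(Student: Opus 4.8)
The plan is to pass to the difference $w=u_1-u_2$, exploit the logarithmic convexity of $t\mapsto\|w(t)\|$, and close the argument by recognizing $\|w(T)\|/(2\de)$ as a subsolution of the fixed-point map behind \eqref{eq:alge}; crucially, no bound at $t=T$ is ever invoked. First I would set $w(t)=u_1(t)-u_2(t)$, which again solves $\p_t w+Aw=0$ on $(0,T)$, and read off from \eqref{cond:secb} the two facts $\|w(0)\|\le 2\de$ and $\|w(T)-w(s)\|\le 2K\de$ (each by the triangle inequality — which is why both powers of $2$ survive consistently into the final estimate). If $w(0)=0$ then $w\equiv 0$ and there is nothing to prove, so I assume $\|w(t)\|>0$ on $[0,T]$.

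Expanding $w$ along the orthonormal eigenfunctions $\phi_n$ of $-A$ gives $w(t)=\sum_n c_n e^{\la_n t}\phi_n$, where the $c_n$ are the Fourier coefficients of $w(0)$, so $\|w(t)\|^2=\sum_n c_n^2 e^{2\la_n t}$ and a termwise H\"older inequality yields the standard log-convexity bound $\|w(\th T)\|\le\|w(0)\|^{1-\th}\|w(T)\|^{\th}$ for $\th\in[0,1]$ (this is the estimate underlying \eqref{eq:holder}; see also \cite{agmon,payne}). Combining the triangle inequality with this bound at $\th=s/T$,
\[
\|w(T)\|\le\|w(T)-w(s)\|+\|w(s)\|\le 2K\de+(2\de)^{1-s/T}\|w(T)\|^{s/T}.
\]
Dividing through by $2\de$ and writing $z_1:=\|w(T)\|/(2\de)$, this is exactly $z_1\le K+z_1^{s/T}=\Phi(z_1)$, i.e. $z_1$ is a subsolution of the map $\Phi(x)=K+x^{s/T}$ whose unique fixed point is $\Lambda$.

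Next I would conclude $z_1\le\Lambda$. Since $\Phi$ is increasing on $(0,\infty)$ and $z_1\le\Phi(z_1)$, the iterates $z_{n+1}=\Phi(z_n)$ are nondecreasing, and by the last statement of \lemref{lemma:gamma_ineq} they converge to $\Lambda$ from \emph{any} positive seed — precisely the strengthening of Carasso's hypothesis flagged after \lemref{lemma:gamma_ineq}, and what makes it legitimate to seed the iteration with a quantity $z_1$ that is not a priori bounded by $M/\de$. Hence $z_1\le\lim_n z_n=\Lambda$, that is, $\|w(T)\|\le 2\Lambda\de$. Feeding this back into log-convexity at $\th=t/T$ gives
\[
\|w(t)\|\le(2\de)^{1-t/T}(2\Lambda\de)^{t/T}=2\Lambda^{t/T}\de,
\]
which is \eqref{eq:thm_diff}.

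The only genuinely delicate point is the implication ``$z_1\le\Phi(z_1)\Rightarrow z_1\le\Lambda$'': it needs both the monotonicity of $\Phi$ and the \emph{global} convergence of its fixed-point iteration, so that no constraint on the size of $z_1$ — hence no role for $M$ — is required; equivalently one argues directly that $f(x)=K+x^{s/T}-x$ is positive on $[0,\Lambda]$ and negative on $(\Lambda,\infty)$. Everything else — the reduction to $w$, the log-convexity estimate, and the two applications of the triangle inequality — is routine.
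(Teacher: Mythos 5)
Your proof is correct, but it takes a noticeably more direct route than the paper's. The paper first establishes a separate lemma (\lemref{lem:bound}) bounding $\|z(s)\|$ explicitly --- via the mean value theorem applied to the eigenfunction expansion of $z(T)-z(s)$ --- merely to certify that \emph{some} finite upper bound $\zeta_1$ for $K\tilde{\de}+\|z(s)\|$ exists; it then runs the fixed-point iteration $\zeta_{n+1}/\tilde{\de}\le K+(\zeta_n/\tilde{\de})^{s/T}$ on successive upper bounds, invoking the global convergence statement of \lemref{lemma:gamma_ineq} to drive them down to $\Lambda\tilde{\de}$. You instead apply log-convexity at $\th=s/T$ to produce the self-referential inequality $\|w(T)\|\le 2K\de+(2\de)^{1-s/T}\|w(T)\|^{s/T}$ and conclude $\|w(T)\|/(2\de)\le\Lambda$ in one stroke from the sign structure of $f(x)=K+x^{s/T}-x$ (positive on $[0,\Lambda)$, negative on $(\Lambda,\infty)$, which follows since $f(0)=K>0$, $f\to-\infty$, and $f$ increases then decreases). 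This bypasses \lemref{lem:bound} entirely and replaces the iteration by a single comparison. The one hypothesis both arguments share, and which you use tacitly the moment you write $z_1=\|w(T)\|/(2\de)$, is that $\|w(T)\|$ is finite: the paper flags this in a footnote (existence of $u_j$ on $(0,T]$ forces $\|u_j(T)\|\le M_j$ for some unknown $M_j$), and it is in any case implicit in the SECB constraint $\|u_j(T)-u_j(s)\|\le K\de$ being meaningful. Your closing observation that the step ``$z_1\le\Phi(z_1)\Rightarrow z_1\le\Lambda$'' must not presuppose any size restriction on $z_1$ is precisely the point the paper makes in \rmkref{bcd} about strengthening Carasso's seeding condition $K+1\le z_1\le M/\de$ to an arbitrary positive seed; the direct sign argument you also supply makes even the appeal to the iteration unnecessary.
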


In order to prove the above theorem, we will need the following 
preliminary result to bound $\|z(s)\|$:
\begin{lemma}\label{lem:bound}
Let $z(t)$ be the difference of two solutions $u_j(t),j=1,2,$ to \eqref{eq:backward}
with constraints \eqref{cond:secb}. Then, for $0<s<T,$ we have
\be\label{eq:final}
\|z(s)\|^2
\le \left(\frac{2K{\de}}{T-s}\right)^2 +
\sum_{n=1}^{l} z_n^2(1-\la_n^2)e^{2\la_n s}
\ee
where $z_n= (z(0),\phi_n)$ with the standard $L^2(\Omega)$-inner product notation $(\cdot,\cdot)$ and $l\ge 0$ and
$l\ge 0$ is the largest integer such that $\la_l< 1.$
\end{lemma}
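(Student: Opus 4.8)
The plan is to diagonalize in the eigenbasis $\{\phi_n\}$ of $-A$ and reduce the whole statement to a single scalar inequality coming from the SECB constraint. Setting $z=u_1-u_2$, the difference still solves $z_t+Az=0$ on $\Omega\times(0,T)$, so if $z_n(t):=(z(t),\phi_n)$ then, using self-adjointness of $A$, one gets the scalar ODE $z_n'(t)=\lambda_n z_n(t)$, hence $z_n(t)=z_n e^{\lambda_n t}$ with $z_n=z_n(0)=(z(0),\phi_n)$. In particular $\|z(t)\|^2=\sum_{n\ge1}z_n^2 e^{2\lambda_n t}$ for every $t\in[0,T]$, and all the series appearing below converge because $z(s)$ and $z(T)$ lie in $L^2(\Omega)$ (being differences of admissible solutions).

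Next I would put the second constraint in \eqref{cond:secb} into spectral form. By the triangle inequality $\|z(T)-z(s)\|\le\|u_1(T)-u_1(s)\|+\|u_2(T)-u_2(s)\|\le 2K\de$, and since $z(T)-z(s)=\sum_n z_n\bigl(e^{\lambda_n T}-e^{\lambda_n s}\bigr)\phi_n$ this reads
\[
\sum_{n=1}^\infty z_n^2\bigl(e^{\lambda_n T}-e^{\lambda_n s}\bigr)^2\le 4K^2\de^2 .
\]
Then I would invoke the elementary convexity bound $e^x\ge 1+x$ with $x=\lambda_n(T-s)>0$: since $e^{\lambda_n T}-e^{\lambda_n s}=e^{\lambda_n s}\bigl(e^{\lambda_n(T-s)}-1\bigr)\ge\lambda_n(T-s)e^{\lambda_n s}$, squaring and multiplying by $z_n^2$ gives $z_n^2\lambda_n^2(T-s)^2 e^{2\lambda_n s}\le z_n^2\bigl(e^{\lambda_n T}-e^{\lambda_n s}\bigr)^2$; summing over $n$ and dividing by $(T-s)^2$ produces the key estimate
\[
\sum_{n=1}^\infty z_n^2\lambda_n^2 e^{2\lambda_n s}\le\left(\frac{2K\de}{T-s}\right)^2 .
\]

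Finally I would split $\|z(s)\|^2=\sum_{n\ge1}z_n^2 e^{2\lambda_n s}$ at the index $l$. For $n>l$ one has $\lambda_n\ge1$, hence $z_n^2 e^{2\lambda_n s}\le z_n^2\lambda_n^2 e^{2\lambda_n s}$; for $n\le l$ one has $\lambda_n<1$, so write $z_n^2 e^{2\lambda_n s}=z_n^2(1-\lambda_n^2)e^{2\lambda_n s}+z_n^2\lambda_n^2 e^{2\lambda_n s}$ with $1-\lambda_n^2>0$. Adding everything,
\[
\|z(s)\|^2\le\sum_{n=1}^{l}z_n^2(1-\lambda_n^2)e^{2\lambda_n s}+\sum_{n=1}^\infty z_n^2\lambda_n^2 e^{2\lambda_n s},
\]
and substituting the key estimate yields exactly \eqref{eq:final}.

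I do not expect a real obstacle here: the only substantive point is the choice of the cutoff $\lambda_n=1$ defining $l$, which is precisely what allows the constant $1$ to be dominated by $\lambda_n^2$ on every high mode, so that all high-mode contributions are absorbed into the SECB term $\bigl(2K\de/(T-s)\bigr)^2$, while the finitely many low modes $n\le l$ are carried along as an explicit correction. Everything else is routine bookkeeping with the eigenfunction expansion, the one inequality that does the work being $e^x\ge1+x$ used in the correct direction.
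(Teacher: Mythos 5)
Your proposal is correct and follows essentially the same route as the paper's proof: expand $z$ in the eigenbasis, use the SECB constraint (doubled by the triangle inequality) to bound $\sum_n z_n^2\lambda_n^2 e^{2\lambda_n s}$ by $\bigl(2K\de/(T-s)\bigr)^2$, and split the modes at $\lambda_l<1$ so that the high modes are absorbed via $1\le\lambda_n^2$. The only cosmetic difference is that you obtain $e^{\lambda_n T}-e^{\lambda_n s}\ge\lambda_n(T-s)e^{\lambda_n s}$ from the convexity bound $e^x\ge 1+x$, whereas the paper uses the mean value theorem with $s_n\in(s,T)$; the two are interchangeable one-line arguments.
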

\begin{proof}
Since $z(t)$ is the solution to \eqref{eq:problem} with initial data
$z(0)=u_1(0)-u_2(0)$, it admits the following
representation\footnote{Notice that the assumption on the
  existence of the solution $u_j(t)$ for $t\in (0,T]$ implies that $\|u_j(T)\| \le M_j$
with a possibly different bound $M_j>0$ for each $j.$
Thus $\|\sum_{n=1}^{\infty} z_n e^{\la_n t}\phi_n\|^2 
\le \sum_{n=1}^{\infty} |(u_1(0),\phi_n) - (u_1(0),\phi_n)|^2  e^{2\la_n t}
\le 2 \sum_{n=1}^{\infty} \left\{|(u_1(0),\phi_n)|^2 +
  |(u_1(0),\phi_n)|^2\right\}  e^{2\la_n T} \le 2 (M_1^2 + M_2^2).
$
Thus \eqref{eq:series_z} forms a convergent series.}:
\be\label{eq:series_z}
z(t)=\sum_{n=1}^{\infty} z_n e^{\la_n t}\phi_n.
\ee
By \eqref{cond:secb} and \eqref{eq:series_z},
\be\label{eq:diff}
\|z(T)-z(s)\|^2 =\sum_{n=1}^{\infty}z_n^2(e^{\la_n T}-e^{\la_n s})^2
\le (K\tilde{\de})^2,
\ee
where $\tilde{\de}=2\de$. By the mean value theorem there exists 
$s_n$ ($s<s_n<T$) such that 
$(e^{\la_n T}-e^{\la_n s})=\la_n(T-s)e^{\la_n s_n}$  for each $n$. 
Therefore, utilizing $s<s_n$, from \eqref{eq:diff} it follows that
\be\label{eq:diff2}
\sum_{n=1}^{\infty}z_n^2\la_n^2(T-s)^2 e^{2\la_n s}\le (K\tilde{\de})^2.
\ee
By dividing both sides of \eqref{eq:diff2} by $(T-s)^2$ and 
replacing $\sum_{n=l+1}^{\infty}z_n^2\la_n^2 e^{2\la_n s}$
by $\sum_{n=l+1}^{\infty}z_n^2 e^{2\la_n s}$, a
rearrangement yields
\be\label{eq:diff3}
\sum_{n=l+1}^{\infty}z_n^2 e^{2\la_n s} \le 
\left(\frac{K\tilde{\de}}{T-s}\right)^2
- \sum_{n=1}^{l}z_n^2 \la_n^2 e^{2\la_n s}.
\ee
Finally by adding both sides of \eqref{eq:diff3} by 
$\sum_{n=1}^{l}z_n^2 e^{2\la_n s}$, 
\eqref{eq:series_z} implies that
\bes
\|z(s)\|^2=\sum_{n=1}^{\infty}z_n^2 e^{2\la_n s}\le
\left(\frac{K\tilde{\de}}{T-s}\right)^2 +
\sum_{n=1}^{l} z_n^2(1-\la_n^2)e^{2\la_n s}.
\ees
This proves the lemma.
\end{proof}

\begin{remark}
If $l=0$, we interpret $\sum_{n=1}^l$ as $0$.
\end{remark}
Now we are in a position to prove \thmref{thm:secb}.

\begin{proof}{\rm [of \thmref{thm:secb}]}
Let $z(t)=u_1(t)-u_2(t)$ and note that (see \cite{agmon, lin-backward})
\be\label{zt}
\|z(t)\|\le \|z(T)\|^{t/T}\|z(0)\|^{1-t/T}, ~t\in[0,T].
\ee
Let $\tilde{\de}=2\de$.
By \eqref{cond:secb} and the triangle inequality, we have
\be\label{here:secb}
\|z(0)\|\le \tilde{\de} \quad\mbox{ and }\quad 
\|z(T)-z(s)\|\le K\tilde{\de}.
\ee
By \eqref{zt}, the triangle inequality, and \eqref{here:secb},
we have
\be\nonumber
\|z(t)\| &\le& (\|z(T)-z(s)\|+\|z(s)\| )^{t/T} \|z(0)\|^{1-t/T}\\
         &\le& (K\tilde{\de} + \|z(s)\|)^{t/T} \tilde{\de}^{1-t/T},\qquad ~t\in[0,T].
\label{rewrite}
\ee
Due to \lemref{lem:bound}, there exists an upper bound
$\zeta_1>0$ for $K\tilde{\de}+\|z(s)\|$ such that
\be\label{abc}
K\tilde{\de}+\|z(s)\| \le \zeta_1. 
\ee
Then, \eqref{rewrite} implies that
\be\label{rewrite2}
\|z(s)\| \le \zeta_1^{s/T}\tilde{\de}^{1-s/T}. 
\ee
If $\zeta_1$ is a sharp estimate such that 
$\zeta_1 = K\tilde{\de}+\zeta_{1}^{s/T}\tilde{\de}^{1-s/T},$ 
then ${\zeta_1}/{\tilde{\de}}$ is the
unique root of \eqref{eq:alge}; in this case, set $\Lambda =
{\zeta_1}/{\tilde{\de}}.$
Otherwise, we have a successive estimate for $\zeta_2$ 
such that
\[
 K \tilde{\de}+\|z(s)\|\le  \zeta_2 \le
 K\tilde{\de}+\zeta_{1}^{s/T}\tilde{\de}^{1-s/T}
\] by
using \eqref{rewrite2} in place of $\|z(s)\|$ in \eqref{abc}.
A division of this by $\tilde{\de}$ gives
\bes
{\zeta_2}/{\tilde{\de}}\le K+\left({\zeta_{1}}/{\tilde{\de}}\right)^{s/T}.
\ees
Again if $\zeta_2$ is a sharp estimate such that 
$\zeta_2 = K\tilde{\de}+\zeta_{2}^{s/T}\tilde{\de}^{1-s/T},$ 
then ${\zeta_2}/{\tilde{\de}}$ is the
unique root of \eqref{eq:alge}; in this case, set $\Lambda =
{\zeta_2}/{\tilde{\de}}.$ 
Otherwise, continue this iteration which results in
\bes
{\zeta_n}/{\tilde{\de}} \le K+\left({\zeta_{n-1}}/{\tilde{\de}}\right)^{s/T}
\ees
for $n=1,2,3,\cdots.$
Consequently, a standard fixed point iteration argument 
implies that $\frac{\zeta_n}{\tilde{\de}} \le \Lambda,$ 
where $\Lambda$ is the unique root of \eqref{eq:alge}. Thus,
invoking \eqref{rewrite}, we arrive at
\begin{eqnarray}\label{eq:proof3.13}
\|z(t)\|\le (\Lambda\tilde{\de})^{t/T}\tilde{\de}^{1-t/T}=
\Lambda^{t/T}\tilde{\de},
\end{eqnarray}
which completes the proof.
\end{proof}
\begin{remark}\label{bcd}
In \cite{carasso}, The constraint $\|u_j(T)\|\le M$ was used to bound $\zeta_1$
in \eqref{abc}.
However, with any $\zeta_1>0$, the limit point of the sequence
$\left(\zeta_n/\tilde\delta\right)$ is bounded by $\Lambda$ (see  
\lemref{lemma:gamma_ineq}), and thus the use of $M$ in \cite{carasso}
is not essential provided $\zeta_1$ is bounded by some constant,
which is proved in \lemref{lem:bound}.
\end{remark}
\begin{remark}
An emphasis has to be made: the constant $s^*\in(0,T)$, which depends in a
priori bound $M$ as well as $\delta$ and $K$, is not a requirement in 
\thmref{thm:secb}.
\end{remark}

\begin{remark}
Notice that in \eqref{eq:final} the right hand side
goes to infinity as $s\to T$. Thus, if $s=T$(it means we do not have the SECB
constraint),
we may not bound 
$\|z(s)\|$, and therefore \eqref{eq:thm_diff} may not be proved, {\it i.e.,}
the SECB constraint actually guarantees continuous dependence on data in place of 
$\|u_j(T)\|\le M$.
\end{remark}
\begin{remark}\label{rem:7}
For some $M$, $K>0$ and $0<\de\ll 1$, 
let $s^*$ be such that $\frac{M}{\de}=K+(\frac{M}{\de})^{s^*/T}.$
The following three cases should be considered:
\begin{enumerate}
\item
 In the case of the SECB constraint with $s=s^*$, the unique root $\Lambda$ of 
\eqref{eq:alge} becomes $\frac{M}{\de}$. Thus by \thmref{thm:secb}
we have $\|z(t)\|\le 2\Lambda^{t/T}\de=2M^{t/T}\de^{1-t/T}$ as in \eqref{eq:holder}.
This means that the SECB constraint is indeed so general to include
the case in which the F. John's bounded constraint $\|u_j(T)\|\le M$ is used.
\item In the case of $s<s^*$, one can use the SECB constraint which will lead to a
substantial improvement over the F. John's a priori estimate
\eqref{eq:holder}.
\item
However, in the case of  $s^*<s<T$, the stability estimate \eqref{eq:thm_diff}
might be worse than that \eqref{eq:holder}. 
\end{enumerate}
Combining these facts,
the knowledge of an a priori bound in \eqref{condition1} will be still useful for
an exact computation of $s^*$ that will guarantee a choice of $s<s^*$.
\end{remark}

\begin{remark}
It should be emphasized that 
even in the case that the a priori bound $M$ in \eqref{condition1} is not used and thus
$s^*$ is not exactly given, the SECB constraint \eqref{cond:secb} will provide
the stability estimate \eqref{eq:thm_diff} which guarantees the
continuous dependence of the solutions on the initial data up to the boundary $t=T$.
\end{remark}

\section{A constructive regularized solution}\label{sec4}
In this section we propose a new regularized solution
to backward parabolic problems
based on the observation in the previous section.

Let $g\in L^2(\O)$ be given initial data.
Suppose $0<\delta \ll 1$,  $K \gg 1$ and $s\in(0,T)$ are given. 
Then with the $\Lambda$ which is the unique root of \eqref{eq:alge},
choose an appropriate contour $\Gamma \subset \rho(-A)$. For instance,
following \cite{jleesheen-back, sst-para2}, for suitable $\gamma, \nu,$ and
$\sigma$, let
\begin{eqnarray}
\Ga&=&\left\{z=z(y)\, |~ z(y) = \gamma - \sqrt{\nu^2 + y^2} + i\sigma y,
\text{ with increasing }  y \text{ from } -\infty \text{ to }
+\infty\right\},\nonumber \\
 &&\Re(z(0))= \frac{\log\Lambda}{T},
\text{ where } \Lambda \text{ is the unique root of \eqref{eq:alge} }.
\label{eq:Gamma}
\end{eqnarray}
Although one does not have a precise information on the exact eigenvalues of
$-A$,
there will be a finite number of eigenvalues which are strictly less than $\frac{\log\Lambda}{T}.$
Let $\lambda_1,\cdots,\lambda_N$ be all such eigenvalues.
Also, denote 
\[
\Phi_\G = \operatorname{span}\{\phi_1,\cdots,\phi_N\} \subset L^2(\O)
\]
and let $\Pi_\G: L^2(\O)\rightarrow \Phi_\G$ be the $L^2(\O)$-projection.
For $u_{j0}\in L^2(\O)$, define
\be
u^{\Ga,u_{j0}}(t)=\frac{1}{2\pi \i}\int_{\Ga}e^{zt}v(z) \,dz,
\label{eq:regul_sol}
\ee
where $v=v(z)=v(\cdot,z) \in H^1_0(\O)$ is the unique solution to
\be\label{eq:freq}
zv+Av=u_{j0}, \quad z\in \rho(-A).
\ee
Formally, the $u^{\G,u_{j0}}$ can be written as
\be\label{eq:regul_sol2}
u^{\Ga,u_{j0}}(t)=\frac{1}{2\pi \i}\int_{\Ga}e^{zt} (zI+A)^{-1} u_{j0} \,dz.
\ee
Observe that $u_j = u^{\G,u_{j0}}$ satisfies
\be
\frac{\p u_j}{\p t} + Au_j &=& 0 \quad\text{on }\O\times (0,T).\label{eq:problem2}
\ee
Define the class of  {\it new regularized solutions} by
\be
\left\{u_j= u^{\Ga,u_{j0}}\,|\, u^{\Ga,u_{j0}} \text{ is defined by }
  \eqref{eq:regul_sol} \text{ and } \eqref{eq:freq};\, \|u_{j0}-g\|\le \de,\, \|u_j(T)-u_j(s)\| \le K\de \right\}.
\label{eq:new_regul_sol}
\ee

Notice that the integrands of two Cauchy integrals \eqref{eq:regul_sol}
and \eqref{eq:regul_sol2} can be written as the infinite series
$\sum_{k=1}^\infty \frac{e^{zt}}{z-\la_k}(u_{j0},\phi_k)\phi_k.$
Among them only a finite number of terms from $k=1$ to $k=N$ are to the
left of the contour $\G$, and the rest of infinite terms are analytic in the
left half plane. Thus the Cauchy integrals \eqref{eq:regul_sol}
and \eqref{eq:regul_sol2}  are convergent. Indeed, we have
the following spectral reprentation formula:
\begin{proposition}\label{prop:spect}
The spectral representation of $u^{\G,u_{j0}}(t)$ is given by
\bes
u^{\Ga,u_{j0}}(t)=\sum_{k=1}^N e^{\la_kt}(u_{j0},\phi_k)\phi_k.
\ees
\end{proposition}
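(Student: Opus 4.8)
The plan is to collapse the Cauchy integral \eqref{eq:regul_sol} into a sum of elementary scalar contour integrals, one per eigenmode of $-A$, and then to evaluate each by the residue theorem. Throughout I would use that $\Ga\subset\rho(-A)$, so no eigenvalue lies on $\Ga$ and $\inf_k|z-\la_k|>0$ for every $z\in\Ga$.

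First I would put $v(z)$ into spectral form. Writing $v(z)=\sum_{k\ge1}\alpha_k(z)\phi_k$ and inserting this into \eqref{eq:freq}, the relation $A\phi_k=-\la_k\phi_k$ gives $\sum_{k\ge1}(z-\la_k)\alpha_k(z)\phi_k=\sum_{k\ge1}(u_{j0},\phi_k)\phi_k$, hence $\alpha_k(z)=(u_{j0},\phi_k)/(z-\la_k)$ and
\[
v(z)=\sum_{k\ge1}\frac{(u_{j0},\phi_k)}{z-\la_k}\,\phi_k,\qquad z\in\rho(-A),
\]
the series converging in $H^1_0(\O)$ pointwise in $z$, and in particular along $\Ga$.

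Next I would integrate term by term. For fixed $t\in(0,T]$, along $\Ga$ one has $\Re z=\ga-\sqrt{\nu^2+y^2}\to-\infty$ at rate $-|y|$, so $|e^{zt}|$ decays exponentially in $|y|$ while $|z-\la_k|$ grows; hence $e^{zt}v(z)$ is absolutely integrable on $\Ga$ and the partial sums converge uniformly there — this is the convergence already recorded in the paragraph preceding the statement. Interchanging the sum with $\int_\Ga$ yields $u^{\Ga,u_{j0}}(t)=\sum_{k\ge1}(u_{j0},\phi_k)\,\phi_k\cdot\frac{1}{2\pi\i}\int_{\Ga}\frac{e^{zt}}{z-\la_k}\,dz$. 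Then I would evaluate each scalar integral by residues: since $\Ga$ is the left‑opening contour \eqref{eq:Gamma} traversed upward and $t>0$, it may be closed by a large left arc on which $e^{zt}$ is exponentially small, so $\frac{1}{2\pi\i}\int_{\Ga}\frac{e^{zt}}{z-\la_k}\,dz=e^{\la_kt}$ when $\la_k$ lies to the left of $\Ga$ and $0$ otherwise. The geometric point is that the vertical line through the real number $\la_k$ meets $\Ga$ iff $\ga-\sqrt{\nu^2+y^2}=\la_k$ is solvable, i.e. iff $\ga-\la_k\ge\nu$, i.e. iff $\la_k\le\ga-\nu=\frac{\log\Lambda}{T}$; since $\frac{\log\Lambda}{T}$ is not an eigenvalue, $\la_k$ lies strictly to the left of $\Ga$ exactly for $k=1,\dots,N$ and strictly to the right for $k>N$. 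Collecting the surviving terms gives $u^{\Ga,u_{j0}}(t)=\sum_{k=1}^N e^{\la_kt}(u_{j0},\phi_k)\phi_k$.

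The main obstacle will be making the last two steps fully rigorous on the \emph{unbounded} contour $\Ga$: justifying the term‑by‑term integration and showing the closing arc contributes nothing, both of which rest on the exponential decay of $e^{zt}$ along $\Ga$ for $t>0$ (the endpoint identity $u^{\Ga,u_{j0}}(0)=\Pi_\Ga u_{j0}$ would then need a separate limiting argument, since at $t=0$ the integrand decays only like $|z|^{-1}$ and the closing arc must be handled more carefully). Everything else — the modewise solution of \eqref{eq:freq}, the separation of the $N$ captured poles from the holomorphic tail, and the residue evaluation — is then routine given $\Ga\subset\rho(-A)$.
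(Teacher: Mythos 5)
Your proposal is correct and follows essentially the same route as the paper: expand $v(z)$ spectrally via \eqref{eq:freq} to get $v(z)=\sum_{k}\frac{(u_{j0},\phi_k)}{z-\la_k}\phi_k$, integrate term by term over $\Ga$, and use Cauchy's theorem/residues to keep exactly the $N$ modes with $\la_k<\frac{\log\Lambda}{T}$ lying to the left of $\Ga$ and kill the holomorphic tail. The extra care you take with the unbounded contour (absolute integrability from the decay of $e^{zt}$ along $\Ga$, the closing arc, and the separate treatment of $t=0$) only makes explicit what the paper leaves implicit.
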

\begin{proof}
By taking the $L^2(\O)$-inner product of both sides of \eqref{eq:freq}
against $\phi_k$,
one gets a spectral representation of $v(z)$ by
\be\label{eq:v_expansion}
v(z)=\sum_{k=1}^\infty \frac{1}{z-\la_k}(u_{j0},\phi_k)\phi_k.
\ee
Since $N$ is the largest integer such that $\la_N < \frac{\log\Lambda}{T}$
and $\sum_{k=N+1}^\infty\frac{1}{z-\la_k}(u_0,\phi_k)\phi_k$
is analytic in the half plane left to $\Ga$,
by incorporating \eqref{eq:regul_sol},  
\eqref{eq:v_expansion}, and Cauchy's integral theorem, we have
\bes
u^{\Ga,u_{j0}}(t)&=&\frac{1}{2\pi \i} \sum_{k=1}^{\infty}\int_\Ga \frac{e^{zt}}
{z-\la_k}(u_{j0},\phi_k)\phi_k\,dz \\&=& 
\frac{1}{2\pi \i}\sum_{k=1}^{N}\int_\Ga \frac{e^{zt}}{z-\la_k}
(u_{j0},\phi_k)\phi_k \,dz
= \sum_{k=1}^N e^{\la_kt}(u_{j0},\phi_k)\phi_k.
\ees
This completes the proof.
\end{proof}

Due to the definition of $u^{\G,u_{j0}}$, the same stability estimate as 
\eqref{eq:thm_diff} follows. 
\begin{corollary}\label{thm:main_secb}
Let $u^{\Ga,u_{10}}$ and $u^{\Ga,u_{20}}$ be new regularized solutions. Then,
\begin{eqnarray}\label{eq:thm45}
\| u^{\Ga,u_{10}}(t) - u^{\Ga,u_{20}}(t) \| \le 2 \Lambda^{t/T} \de, \quad
t \in [0,T].
\end{eqnarray}
\end{corollary}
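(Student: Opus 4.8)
The plan is to reduce Corollary~\ref{thm:main_secb} directly to \thmref{thm:secb}. The two new regularized solutions $u^{\Ga,u_{10}}$ and $u^{\Ga,u_{20}}$ are, by construction \eqref{eq:new_regul_sol}, functions of the form $u^{\Ga,u_{j0}}$ that (i) solve the backward parabolic equation \eqref{eq:problem2} on $\O\times(0,T)$ by the observation immediately following \eqref{eq:regul_sol2}, and (ii) satisfy the two constraints $\|u_{j0}-g\|\le\de$ and $\|u_j(T)-u_j(s)\|\le K\de$. These are exactly the hypotheses of \thmref{thm:secb}, with $u_{j0}$ playing the role of $u_j(0)$. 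So the first step is simply to verify membership: write down that $u_j:=u^{\Ga,u_{j0}}$ satisfies $\partial_t u_j + Au_j=0$ and the constraints \eqref{cond:secb}, citing \propref{prop:spect} if one wants to make the spectral form explicit.

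The second step is to invoke \thmref{thm:secb} with this pair. Since both $u_1=u^{\Ga,u_{10}}$ and $u_2=u^{\Ga,u_{20}}$ are solutions to $\partial_t u_j+Au_j=0$ on $\O\times(0,T)$ with constraints \eqref{cond:secb} for the same known parameters $\de,K,s$, the theorem gives $\|u_1(t)-u_2(t)\|\le 2\Lambda^{t/T}\de$ for $t\in[0,T]$, where $\Lambda=\Lambda(K,s)$ is the unique root of \eqref{eq:alge}. Since $\Lambda$ is precisely the quantity used to fix the contour $\Gamma$ in \eqref{eq:Gamma}, there is no conflict of notation, and this is exactly the claimed estimate \eqref{eq:thm45}. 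That finishes the proof.

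There is essentially no obstacle here; the only thing to be careful about is to state clearly why the new regularized solutions fall under the scope of \thmref{thm:secb} — namely that \thmref{thm:secb} never uses the a priori bound $\|u_j(T)\|\le M$, so it applies to \emph{any} pair of solutions of the backward equation obeying \eqref{cond:secb}, in particular to the regularized ones. One might also remark that the two solutions need not share the same regularization data $u_{j0}$, only the same triple $(\de,K,s)$; the estimate then follows verbatim. I would write the proof as two or three sentences: observe that each $u^{\Ga,u_{j0}}$ satisfies \eqref{eq:problem2} and the constraints by definition \eqref{eq:new_regul_sol}, and apply \thmref{thm:secb}.

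\begin{proof}
By construction \eqref{eq:new_regul_sol}, each $u_j:=u^{\Ga,u_{j0}}$, $j=1,2$, satisfies the backward parabolic equation \eqref{eq:problem2} on $\O\times(0,T)$ together with the two constraints $\|u_{j0}-g\|\le\de$ and $\|u_j(T)-u_j(s)\|\le K\de$, i.e. precisely the hypotheses \eqref{cond:secb} of \thmref{thm:secb} (with $u_{j0}$ in the role of $u_j(0)$). Note that \thmref{thm:secb} makes no use of any a priori bound $\|u_j(T)\|\le M$, so it applies verbatim to this pair. Applying it with the given parameters $\de>0$, $K>0$, $s\in(0,T)$, we obtain
\[
\|u^{\Ga,u_{10}}(t)-u^{\Ga,u_{20}}(t)\|\le 2\Lambda^{t/T}\de,\quad t\in[0,T],
\]
where $\Lambda=\Lambda(K,s)$ is the unique root of \eqref{eq:alge} --- the same $\Lambda$ used to fix the contour $\Gamma$ in \eqref{eq:Gamma}. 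This is \eqref{eq:thm45}.
\end{proof}
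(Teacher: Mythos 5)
Your overall strategy --- reduce the corollary to \thmref{thm:secb} --- is the same as the paper's, but there is a genuine gap in the way you verify the hypotheses. You write that the constraint $\|u_{j0}-g\|\le\de$ from \eqref{eq:new_regul_sol} is ``precisely'' hypothesis \eqref{cond:secb} ``with $u_{j0}$ in the role of $u_j(0)$.'' That identification is false: by \propref{prop:spect}, the initial value of the regularized solution is
\[
u^{\Ga,u_{j0}}(0)=\sum_{k=1}^N (u_{j0},\phi_k)\phi_k=\Pi_\G u_{j0},
\]
the spectral truncation of $u_{j0}$, not $u_{j0}$ itself. The class \eqref{eq:new_regul_sol} only guarantees $\|u_{j0}-g\|\le\de$; it does \emph{not} guarantee $\|\Pi_\G u_{j0}-g\|\le\de$, since $\|\Pi_\G u_{j0}-g\|^2=\|\Pi_\G(u_{j0}-g)\|^2+\|(I-\Pi_\G)g\|^2$ and the tail $\|(I-\Pi_\G)g\|$ of the data $g$ outside $\Phi_\G$ can be large. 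So the pair $u^{\Ga,u_{10}},u^{\Ga,u_{20}}$ need not satisfy the first constraint of \eqref{cond:secb} verbatim, and \thmref{thm:secb} cannot be invoked exactly as stated.

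The repair is short, and it is what the paper does: the proof of \thmref{thm:secb} uses the condition $\|u_j(0)-g\|\le\de$ only through its consequence $\|z(0)\|=\|u_1(0)-u_2(0)\|\le\tilde\de=2\de$, and for the regularized solutions this bound survives because the $L^2$-orthogonal projection is a contraction:
\[
\|u^{\Ga,u_{10}}(0)-u^{\Ga,u_{20}}(0)\|=\|\Pi_\G u_{10}-\Pi_\G u_{20}\|\le\|u_{10}-u_{20}\|\le\|u_{10}-g\|+\|g-u_{20}\|\le 2\de.
\]
Combined with the SECB constraint $\|u^{\Ga,u_{j0}}(T)-u^{\Ga,u_{j0}}(s)\|\le K\de$, which \emph{is} imposed directly on the regularized solutions in \eqref{eq:new_regul_sol}, the argument of \thmref{thm:secb} then goes through and yields \eqref{eq:thm45}. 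You should add this projection step explicitly (and, if you want to cite \thmref{thm:secb} as a black box, note that its proof depends on the initial data only through $\|z(0)\|$); as written, your verification of the hypotheses does not hold.
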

\begin{proof}
Since
\[
\| u^{\Ga,u_{10}}(0) - u^{\Ga,u_{20}}(0) \| = 
\| \Pi_\G u_{10} - \Pi_\G u_{20} \| \le 
\| u_{10} -  u_{20} \| \le \delta, 
\]
the estimate \eqref{eq:thm45} is an immediate consequence of \thmref{thm:secb}.
\end{proof}

\begin{remark}\label{rem:4.3}
The implementation of the above regularized solution can be given as follows.
\begin{enumerate}
\item Given $\delta, K, s,$ solve for $\Lambda$ satisfying \eqref{eq:alge}.
\item Choose a contour $\G$ as in \eqref{eq:Gamma}.
\item Deform the contour $\G$ as a hyperbola, parabola, or Talbot contour with
parameter to be the imaginary part. (See, for more details, \cite{sst-para2, weid-tref-07, weid-tref-schm-06}.)
\item Represent the infinite contour as a graph of a function on a finite interval, on which choose
composite trapezoidal points. (Again refer to \cite{sst-para2, weid-tref-07, weid-tref-schm-06}.)
\item Solve a set of complex-valued, Helmholtz-type problems \eqref{eq:freq}
for such finite number of contour points, by any kind of space discretization methods (e.g. finite
element method, spectral method, etc.)
\item Take a discrete sum of such solution to approximate the contour representation of the solution given in \eqref{eq:regul_sol}.
\end{enumerate}
This type of procedures, called ``Laplace transformation method for parabolic
problems'', have been proposed and analyzed for forward parabolic problems,
in \cite{sst-para, sst-para2, gavril01, gavril02, gavril04, gavril05, jleesheen-laplace, marban-palencia,
  marban-palencia2, mclean-thomee, thomee-ijnam, weid-06, weid-tref-07, weid-tref-schm-06},
integro-differential equations \cite{kwonsheen-memory, mclean-sloan-thomee}
and backward parabolic problems \cite{jleesheen-back, jleesheen-denoising}.
\end{remark}

\section{Numerical examples}\label{sec5}
In this section, we try to find regularized solutions
which are members of 
the class defined in \eqref{eq:new_regul_sol},
and illustrate the behavior of them.
We construct the solutions following the steps in \rmkref{rem:4.3}
for the following backward parabolic problem:
\bes
u_t+cu_{xx}&=&0 \quad\text{on }\O\times (0,T),\\
u &=&0 \text{ on } \partial \Om,
\ees
where $\Om=(0,\pi), T=4,$ and $c=1/32$.
Let the piecewise linear solution at $t=T$,
which should be sought, given by
\bes
u(\cdot, T)=\left\{ \begin{array}{lr}
\frac{16}{\pi}x, & 0\le x\le\frac{\pi}{4},\\
-\frac{16}{\pi}x +8, & \frac{\pi}{4}\le x \le \frac{\pi}{2},\\
0, & \mbox{ otherwise}. \end{array}\right.
\ees
The following truncated series solutions 
\be\label{eq:ser_sol}
\tilde u(x,t) = \sum_{k=1}^{1000} \frac1{c (k \pi)^2} \left[ 2 \sin\left(\frac{3 k\pi}{4}\right)
             - \sin \left(\frac{k\pi}{2}\right)  - \sin (k\pi) \right]
 \sin( kx )  e^{ -c k^2 (T-t)}
\ee
are used for the reference solutions $u_0(x), u(x,T/4), u(x,T/2),$ and $u(x,3T/4).$

Let $s=3.8$. Based on the information on $\tilde u_0$, we fix 
$K=0.142/\de$ for given $\de$.

To find solutions $u^{\Ga,u_{j0}}$ in the class \eqref{eq:new_regul_sol},
initial data $u_{j0}$ are generated by perturbing the above truncated series
solution $\tilde u_0(x)= \tilde u(x,0)$ given by \eqref{eq:ser_sol}
using the Fortran 90/95 intrinsic subroutine $random\_number(\cdot)$ 
so that $\|u_{j0}-\tilde u_0\|<\de$ for $\de=10^{-4},10^{-3},10^{-2}$.
Then the solutions $u^{\Ga,u_{j0}}(x,t)$ are approximated by using the standard
piecewise linear finite element with sufficiently many element (1024 meshes)
and the Laplace transformation method on the deformed contour
$
\Ga =\left\{ z(y) = \gamma(\delta,s,K)-\sqrt{\nu^2 + y^2} + \i y, \quad y\in [-\infty, \infty]\right\}
$
in \eqref{eq:alge} is used, where $\nu=0.5$ and $\gamma(\delta,s,K)\simeq
2.583, 2.067, 1.074$ for $\delta = 10^{-4}, 10^{-3}, 10^{-2}$,
respectively. In all experiments 160 number of contour points $z_j$'s are 
chosen which will be sufficient to circumvent the numerical errors in time
discretization.

\tabref{table1} shows the $L^2$ errors of $u^{\Ga,u_{j0}}$ to the reference solutions
and the theoretical upper bound values $2\Lambda^{t/T}\de$,
as given in Corollary \ref{thm:main_secb}
for each $\delta = 10^{-4},10^{-3},10^{-2}$. Notice that the
$L^2$ errors of $u^{\Ga,u_{j0}}$ are much smaller than the predicted
bounds for all cases. To check $u^{\Ga,u_{j0}}$ is in the
desired class, we calculated $\|u^{\Ga,u_{j0}}(T)-u^{\Ga,u_{j0}}(s)\|$
values, which are $0.0568,0.0623$ and $0.208$ for 
$\de=10^{-4},10^{-3},10^{-2}$, respectively. 
They are less than $K\de=0.142$ except for the case of $\de=10^{-2}$.
When $\de=10^{-2}$, we tried to generate a solution in the class
by introducing several but not so many enough number of different random
noises to no avail.
However, even though the initial data is not in the class if it is near,
the $L^2$ errors are smaller than
the expected bounds, and thus the quality of the solutions shown 
in Figure 1 are acceptable for all cases including when
$\de=10^{-2}$.

\begin{figure}[ht]
\label{fig1}
\begin{center}
\epsfig{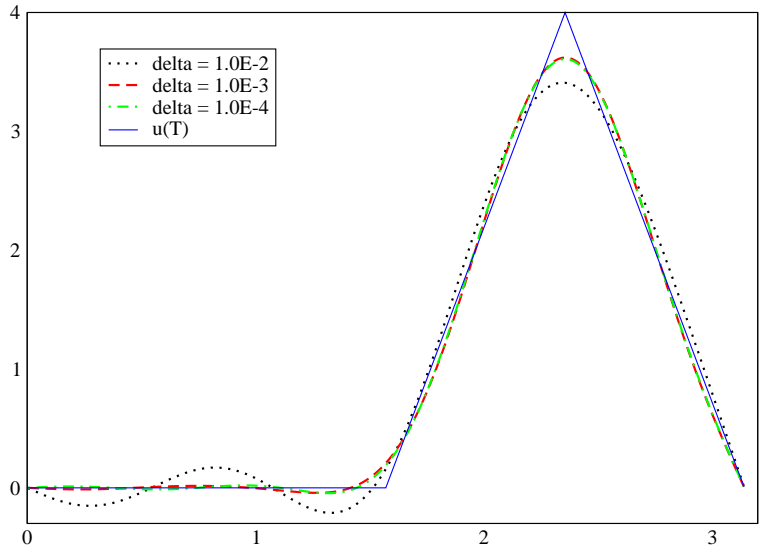}
\vspace{1cm}
\caption{Exact and computed solutions at $t=T$ with various $\de$'s and 
$s=3.8$.}
\end{center}
\end{figure}

\begin{table}[tbhp]
\begin{center}
\begin{tabular}{|c|c|c|c|c|c|c|} \hline
&\multicolumn{2}{|c|}{$L^2$-errors with $\delta = 10^{-4}$}
&\multicolumn{2}{|c|}{$L^2$-errors with $\delta = 10^{-3}$}
&\multicolumn{2}{|c|}{$L^2$-errors with $\delta = 10^{-2}$} \\ \cline{2-7}
\rb{$t$} & Computed & Predicted & Computed & Predicted& Computed & Predicted\\
\hline
T/4 & 4.25E-05 & 1.61E-03 & 2.96E-04 & 9.59E-03 & 7.82E-03&5.86E-02 \\
\hline
T/2 & 3.18E-04 & 1.29E-02 & 1.20E-03 & 4.59E-02 & 2.39E-02&1.71E-01\\
\hline
3T/4 & 4.65E-03 & 1.04E-01 & 6.85E-03 & 2.20E-01 &7.38E-02&5.02E-01\\
\hline
T & 1.48E-01 & 8.33E-01 & 1.50E-01 & 1.06E-00 & 2.72E-01&1.47E-00\\
\hline
\end{tabular}
\vspace{0.1cm}
\caption{$L^2$ errors of our regulared solutions and those of
prediced in the theory are shown for $\delta = 10^{-4},10^{-3},10^{-2}$. 
Here $s=3.8$.}
\label{table1}
\end{center}
\end{table}

\tabref{table2} and Figure 2 are for $s=3.9$. The
$L^2$ errors of $u^{\Ga,u_{j0}}$ are again smaller than the predicted
bounds for all cases. With $s =3.9$, the
$\|u^{\Ga,u_{j0}}(T)-u^{\Ga,u_{j0}}(s)\|$ values are $0.0568, 0.0623$ and $0.2110$ for 
$\de=10^{-4},10^{-3},10^{-2}$, respectively. 
We observe that they are less than $K\de=0.084$ except for $\de=10^{-2}$.
As in the previous case with $s=3.8$, for $\de=10^{-2}$, 
we tried to generate a solution in the class
by introducing different random noises, but it failed.
It also should be noted that the $L^2$ errors are also smaller than
the expected bounds, which implies that the quality of the solutions shown 
in Figure 2 are acceptable for all cases including when
$\de=10^{-2}$.

\begin{figure}[ht]
\label{fig2}
\begin{center}
\epsfig{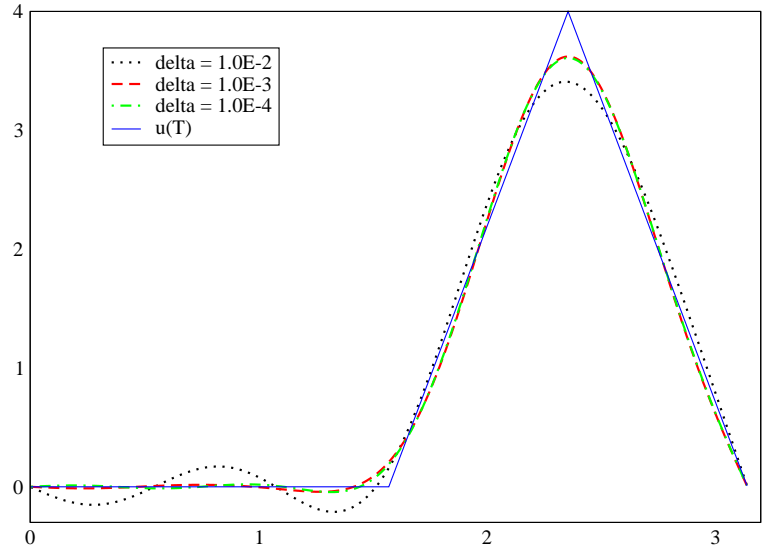}
\vspace{1cm}
\caption{Exact and computed solutions at $t=T$ with various $\de$'s and
  $s=3.9$.}
\end{center}
\end{figure}

\begin{table}[tbhp]
\begin{center}
\begin{tabular}{|c|c|c|c|c|c|c|}
\hline
 & \multicolumn{2}{c|}{$L^2(\O)$ errors with  $\de = 10^{-4}$} & 
\multicolumn{2}{|c|}{$L^2(\O)$ errors with $\de = 10^{-3}$} & 
\multicolumn{2}{|c|}{$L^2(\O)$ errors with $\de = 10^{-2}$} \\
\cline{2-7}
\multicolumn{1}{|c|}{\rb{$t$}} & Computed & Predicted & Computed & Predicted& Computed & Predicted\\
\hline
T/4 & 4.25E-05&1.63E-03 & 2.96E-04& 9.78E-03&7.83E-03&6.00E-02\\
\hline
T/2 & 3.18E-04&1.33E-02 &1.20E-03&4.79E-02&2.39E-02&1.80E-01\\
\hline
3T/4 & 4.65E-03 & 1.09E-01&6.84E-03&2.34E-01&7.40E-02&5.39E-01\\
\hline
T & 1.48E-01 & 8.88E-01 & 1.50E-01 & 1.15E-00&2.72E-01&1.62E-00\\
\hline
\end{tabular}
\vspace{0.1cm}
\caption{$L^2$ errors of our regulared solutions and those of
prediced in the theory are shown for $\delta = 10^{-4},10^{-3},10^{-2}$. 
Here $s=3.9$.}
\label{table2}
\end{center}
\end{table}

\section*{Acknowledgements}
The authors wish to thank the anonymous referees for their helpful
comments, which are reflected in the significantly-improved final version.
JL  was supported by the Korea Research Foundation Grant (KRF-2007-331-C00051) 
and
DS was supported in part by the Korea Research Foundation Grant
(KRF-2006-070-C00014) and Korea Science and Engineering Foundation (KOSEF
R01-2005-000-11257-0), KOSEF R14-2003-019-01002-0(ABRL), and the Seoul R\&BD Program.

\bibliographystyle{plain}

\end{document}